\def\xx{{\bf x}}
\def\+{{\oplus}}
\newcommand{\Ff}{{\mathbb F}}
\newtheorem{thm}{Theorem}[section]
\newtheorem{theorem}{Theorem}[section]
\newtheorem{lemma}[thm]{Lemma}
\newtheorem{cor}[thm]{Corollary}
\newtheorem{prop}[thm]{Proposition}
\newtheorem{defn}[thm]{Definition}
\newtheorem{remark}[thm]{Remark}
\numberwithin{equation}{section}
\begin{document}

\title[Multistate nested canalizing functions]{Multistate nested canalizing functions}

\author[J O Adeyeye, C Kadelka, R Laubenbacher, Y Li]{ John O. Adeyeye $^{1\dag}$, Claus Kadelka$^{2\ddag\star}$, Reinhard Laubenbacher$^{3\ddag}$, Yuan Li$^{4\dag}$}
\address{{\small $^{1}$Department of Mathematics, Winston-Salem State University, NC
27110, USA}\\ \newline
{\small email: adeyeyej@wssu.edu }\\ \newline
{\small $^{2}$ Bioinformatics Institute \& Department of Mathematics, Virginia Tech, Blacksburg, VA
24061, USA }\\
{\small email: claus89@vt.edu}\\ \newline
{\small $^{3}$ Bioinformatics Institute \& Department of Mathematics, Virginia Tech, Blacksburg, VA
24061, USA}\\
{\small email: reinhard@vbi.vt.edu}\\ \newline
{\small $^{4}$ Department of Mathematics, Winston-Salem State University, NC 27110, USA}\\ \newline
{\small email: liyu@wssu.edu}}
\thanks{\newline $^{\dag}$ Supported by an award from the US DoD $\#$ W911NF-11-10166.\\
$^{\ddag}$ Supported by NSF Grant CMMI-0908201.\\
$^\star$ Corresponding author: claus89@vt.edu}
\keywords{nested canalizing function, layer number, indicator function, finite fields}
\date{}

\begin{abstract}
The concept of a \emph{nested canalizing} Boolean function has been studied over the course
of the last decade in the context of 
understanding the regulatory logic of molecular interaction networks, such as gene regulatory networks. Such functions
appear preferentially in published models of such networks. Recently, this concept has been generalized to 
include multi-state functions, and a recursive formula has been derived for their number, as a function of the number
of variables. This paper carries out a detailed analysis of the class of nested canalizing functions over an arbitrary
finite field. Furthermore, the paper generalizes the concept further, and derives a closed formula for the number
of such generalized functions. The paper also
derives a closed formula for the number of equivalence classes under permutation of variables. This is motivated by the
fact that two nested canalizing functions that differ by a permutation of the variables share many important properties
with each other. The paper contributes to the effort of identifying a class of functions over finite fields that are of 
interest in biology and also have interesting mathematical properties. 
\end{abstract}
\maketitle

\interdisplaylinepenalty=2500

\section{Introduction}

\label{sec-intro} Canalizing Boolean functions were introduced by S. Kauffman \cite{Kau1} as appropriate rules in 
Boolean network models of gene regulatory networks. In \cite{Win2}, a formula for the number of canalizing 
Boolean functions was presented. 
More recently, a subclass of these functions, so-called \emph{nested canalizing} functions (NCFs) was introduced \cite{Kau2}. 
The interest in studying these came from dynamic stability properties of networks constructed from such functions. 
Later, canalizing functions were generalized to any finite set, and a closed formula 
for the number of such functions was obtained \cite{Yua2}. 
A multi-state version of nested canalizing functions has been introduced in \cite{Mur,Mur2}, 
where it was shown that networks whose dynamics are controlled by multi-state nested canalizing functions have stability properties
similar to the Boolean case, namely large attractor basins and short limit cycles. 
An analysis of published Boolean and multi-state models of molecular regulatory networks revealed that the large majority of regulatory rules is canalizing, with most rules being in fact nested canalizing \cite{Har, Kau3, Nik, Mur}. 
Thus, nested canalizing rules and the properties of networks governed by them are important to study because 
of their relevance in systems biology. Furthermore, they also play a role in computational science; 
it was shown that the class of Boolean NCFs 
is identical to the class of so-called unate cascade Boolean functions, 
which has been studied extensively in engineering and computer science \cite{Abd2}. 
This class, in turn, corresponds exactly to the class of Boolean functions with corresponding 
binary decision diagrams of shortest average path length \cite{But}. 
Thus, a more detailed mathematical study of NCFs has applications to problems in engineering and computer science as well. For Boolean NCFs, a comprehensive analysis was given in \cite{Yua1}. 
For multi-state NCFs, based on the definition in \cite{Mur2}, a similar analysis is described in this paper. 
We provide a polynomial form of NCFs and an explicit formula of the cardinality of the number of such functions
for a given number of variables. The variables of any NCF are sorted into different layers, according to their dominance. 
Finally, we generalize the concept of NCF and study the number of equivalence classes obtained by permuting the variables.
The paper is arranged as follows. In Section \ref{2},  definitions and notations will be provided, which are then used in Section \ref{3} to prove the main results of the paper. Section \ref{4} deals with a generalization of the concept of nested canalizing function, and we finish with a conclusion in Section \ref{5}.


\section{Definitions and Notation}\label{2} 
In this section we review some concepts and definitions from \cite{Mur, Mur2} 
to introduce the computational concept of \emph{canalization}. 
Let $\Ff=\Ff_{p}$ be a finite field with $p$ elements, where $p$ is prime. 
It is well-known \cite{Lid} that a function $f:\Ff^{n}\rightarrow \Ff$ can be expressed as a polynomial, 
called the algebraic normal form (ANF) of $f$:
\begin{displaymath}
f(x_{1},\ldots,x_{n})=\sum_{\substack{0\leq k_i\leq p-1,\\i=1,\ldots,n}}a_{k_{1}\ldots k_{n}}{x_{1}}^{k_{1}}\cdots{x_{n}}^{k_{n}},%
\end{displaymath}
with coefficients $a_{k_{1}\ldots k_{n}}\in\Ff$. The multivariate degree of the (nonzero) term $a_{k_{1}k_{2}\ldots k_{n}}{x_{1}}^{k_{1}}{x_{2}}^{k_{2}}\cdots {x_{n}}^{k_{n}}$ is defined as $k_{1}+k_{2}+\cdots+k_{n}$. 
Like in the one dimensional case, the greatest degree of all the terms of $f$ is called its {\it algebraic degree}, denoted by $\text{deg}(f)$.  

\begin{defn} \label{def2.1} 
A function $f(x_{1},x_{2},\ldots, x_{n})$ is essential in the variable $x_{i}$ if there exist $r, s\in\Ff$  such that 
\[f(x_{1},\ldots,x_{i-1},r,x_{i+1},\ldots,x_{n})\neq f(x_{1},\ldots,x_{i-1},s,x_{i+1},\ldots,x_{n}).\]
as functions.
\end{defn}

\begin{defn}\label{def2.2}\cite{Yua2} 
Given $a, b\in\Ff$ and $i\in\{1,\ldots,n\}$, a function $f(x_{1},x_{2},\ldots,x_{n})$ is $<i:a:b>$ canalizing if for all $x_j, j\neq i$ $$f(x_{1},\ldots,x_{i-1},a,x_{i+1},\ldots,x_{n})=b.$$
We call $x_i$ the canalizing variable of $f$. The set $S$ of all $a\in \Ff$ such that f is $<i:a:b>$ canalizing for some $b$
will be called the canalizing input set with respect to $x_i$, and $b$ will be called the canalized output with respect to $x_i$ and $a$.
\end{defn}

For a function $f$, we call $g$ a {\it subfunction} of $f$ if $g$ was obtained by fixing the values of some variables of $f$. 
The proof of the following proposition is straightforward.

\begin{prop}\label{prop*}
If a function $f(x_{1},x_{2},\ldots,x_{n})$ is both $<i_1:a_1:b_1>$ and $<i_2:a_2:b_2>$ canalizing, then $b_1=b_2$. If $x_i$ is a canalizing variable of $f$ which is still essential in a subfunction $g$, then $x_i$ is also a canalizing variable of $g$.
\end{prop}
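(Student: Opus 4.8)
The plan is to treat the two assertions separately, in each case exploiting the fact that the canalizing condition of Definition~\ref{def2.2} is an equation quantified over \emph{all} coordinates other than the canalizing one.

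For the first assertion I would begin by noting that it is only meaningful when $i_1\ne i_2$: since the canalizing input set attached to a single variable may contain several values, each with its own canalized output, a common output cannot be expected when $i_1=i_2$, so I take $i_1\ne i_2$ throughout. The key step is then to produce a single input of $f$ that simultaneously witnesses both canalizing hypotheses. Concretely, I would fix arbitrary values $c_j\in\Ff$ in the coordinates $j\notin\{i_1,i_2\}$ and evaluate $f$ at the point $\xx^*$ having $a_1$ in position $i_1$, $a_2$ in position $i_2$, and the $c_j$ elsewhere. Reading $\xx^*$ as an input in which $x_{i_1}=a_1$ is fixed (all others free) forces $f(\xx^*)=b_1$ by the $<i_1:a_1:b_1>$ condition, while reading the same point as one in which $x_{i_2}=a_2$ is fixed forces $f(\xx^*)=b_2$; comparing the two readings yields $b_1=b_2$. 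The one thing to verify here is that such a common point exists, which is exactly where $i_1\ne i_2$ is used.

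For the second assertion, I would write $g$ as the restriction of $f$ obtained by fixing the variables indexed by some set $J$, and first observe that essentiality of $x_i$ in $g$ forces $i\notin J$, so $x_i$ survives as a genuine variable of $g$. By hypothesis there is a canalizing pair $(a,b)$ with $f=b$ whenever $x_i=a$, for \emph{every} assignment of the remaining coordinates. Specializing that universally quantified equation to the particular values assigned on $J$ (leaving the coordinates outside $J\cup\{i\}$ free) shows that setting $x_i=a$ in $g$ again yields the constant $b$; that is, $g$ is $<i:a:b>$ canalizing with the same input $a$ and output $b$. Together with the assumed essentiality, this is precisely the statement that $x_i$ is a canalizing variable of $g$.

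Neither step presents a genuine obstacle, since both reduce to specializing a universally quantified identity; the points deserving care are rather the implicit hypothesis $i_1\ne i_2$ in the first part (without which the common evaluation point, and hence the conclusion, can fail) and the role of the essentiality assumption in the second, which is what guarantees that the restricted variable $x_i$ is not among those fixed to form $g$.
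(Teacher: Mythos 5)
Your proof is correct, and it is precisely the argument the paper has in mind: the paper omits the proof as ``straightforward,'' and evaluating $f$ at a single point carrying $x_{i_1}=a_1$ and $x_{i_2}=a_2$ simultaneously for the first claim, then specializing the universally quantified identity $f\vert_{x_i=a}\equiv b$ to the values fixed in forming $g$ for the second, is exactly that straightforward argument. Your flagging of the implicit hypothesis $i_1\neq i_2$ is also well taken: for $i_1=i_2$ with $a_1\neq a_2$ the conclusion genuinely fails (e.g.\ $f(x_1,x_2)=x_1$ is $<1:a:a>$ canalizing for every $a\in\Ff$), and reading the proposition as concerning distinct variables is consistent with how it is later invoked in the uniqueness part of Theorem~\ref{th2}, where distinct canalizing variables of $f$ are shown to share the canalized output $B_1$.
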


We now assume that $\Ff=\{0,1,\ldots,p-1\}$ is ordered, in the natural order $0<1<\cdots <p-1$. 
A proper subset $S$ of $\Ff$ is called an interval if and only if $S=\{0,\ldots,j\}$ or $S^c=\Ff-S=\{0,\ldots,j\}$, where $0\leq j<p-1$. Hence, a proper subset $S$ is an interval if and only if $S^c$ is an interval. 

\begin{defn}\label{def2.3}\cite{Mur2}
Let $f$ be a  function in $n$ variables and $S_i$ be intervals of $\Ff$, $i=1,\ldots,n$. Let $\sigma$ be a permutation of the set $\{1,2,\ldots,n\}$. Then $f$ is a \emph{nested canalizing function (NCF)} in the variable order $x_{\sigma(1)},\ldots,x_{\sigma(n)}$ with canalizing input sets $S_{1},\ldots,S_{n}$ and canalized output values $b_{1},\ldots,b_{n},b_{n+1}$ with $b_n\neq b_{n+1}$, if it can be represented in the form
\[f(x_{1},\ldots,x_{n})=
\left\{\begin{array}[c]{ll}
b_{1} & x_{\sigma(1)}\in S_{1},\\
b_{2} & x_{\sigma(1)} \notin{ S_{1}}, x_{\sigma(2)}\in S_{2},\\
b_{3} & x_{\sigma(1)}\notin{ S_{1}}, x_{\sigma(2)} \notin{ S_{2}}, x_{\sigma(3)}\in S_{3},\\
\vdots  & \\
b_{n} & x_{\sigma(1)} \notin{ S_{1}},\ldots,x_{\sigma(n-1)} \notin{ S_{n-1}}, x_{\sigma(n)}\in S_{n},\\
{b_{n+1}} & x_{\sigma(1)} \notin{ S_{1}},\ldots,x_{\sigma(n-1)} \notin{ S_{n-1}}, x_{\sigma(n)}\notin{S_{n}}.
\end{array}\right.\]

In short, the function $f$ is said to be nested canalizing if $f$ is nested canalizing in some variable order with some canalizing input sets and some canalized output values.
\end{defn}

Let $\mathbb{S}=(S_{1},S_{2},\ldots,S_{n})$ and $\beta=(b_{1},b_{2},\ldots,b_{n+1})$ with $b_n\neq b_{n+1}$. We say that $f$ is $\{\sigma:\mathbb{S}:\beta\}$ NCF if it is nested canalizing in the variable order $x_{\sigma(1)},\ldots,x_{\sigma(n)}$, 
with canalizing input sets $\mathbb{S}=(S_{1},\ldots,S_{n})$ and canalized output values $\beta=(b_{1},\ldots, b_{n+1})$.

This definition immediately implies the following result. 

\begin{prop}\label{prop2.1}
A function $f$ is $\{\sigma:\mathbb{S}:\beta\}$ NCF  if and only if  $f$ is
$\{\sigma:\mathbb{S'}:\beta'\}$ NCF, where $\mathbb{S'}=(S_{1},S_{2},\ldots,{S_n}^c)$ and $\beta'=(b_{1},b_{2},\ldots,b_{n-2},b_{n+1},b_n)$.
\end{prop}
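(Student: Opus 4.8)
The plan is to verify directly that the two piecewise formulas define the very same function $f$, so that one representation is an NCF precisely when the other is. The whole argument rests on a single observation recorded just before Definition \ref{def2.3}: a proper subset $S$ of $\Ff$ is an interval if and only if $S^c$ is. Hence, since $S_n$ is an interval, so is $S_n^c$, and $\mathbb{S}'=(S_{1},\dots,S_{n-1},S_n^c)$ is again a legitimate tuple of canalizing input sets.

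First I would dispose of the admissibility conditions. The tuple $\beta'$ ends in $b_{n+1},b_n$, and the required inequality between its last two entries is just $b_{n+1}\neq b_n$, which holds by the hypothesis $b_n\neq b_{n+1}$ on $\beta$. Since $\mathbb{S}$ and $\mathbb{S}'$ agree in their first $n-1$ coordinates and $\beta$ and $\beta'$ agree in their first $n-1$ coordinates, the first $n-1$ rows of the two defining tables are literally identical. It therefore suffices to compare the two formulas on the set $A=\{x\in\Ff^{n} : x_{\sigma(i)}\notin S_i\text{ for }i=1,\dots,n-1\}$, where the remaining two rows take over.

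On $A$ the original $\{\sigma:\mathbb{S}:\beta\}$ rule returns $b_n$ when $x_{\sigma(n)}\in S_n$ and $b_{n+1}$ when $x_{\sigma(n)}\notin S_n$, while the $\{\sigma:\mathbb{S}':\beta'\}$ rule returns $b_{n+1}$ when $x_{\sigma(n)}\in S_n^c$ and $b_n$ when $x_{\sigma(n)}\notin S_n^c$. Because $x_{\sigma(n)}\in S_n^c$ is the same condition as $x_{\sigma(n)}\notin S_n$, replacing $S_n$ by $S_n^c$ interchanges the two cases, and swapping $b_n$ for $b_{n+1}$ in the output column restores the original value on each case. Thus the two rules agree pointwise on $A$, and hence everywhere.

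Finally, I would note that the transformation is an involution: applying it to $\{\sigma:\mathbb{S}':\beta'\}$ returns $\{\sigma:\mathbb{S}:\beta\}$ because $(S_n^c)^c=S_n$. Consequently the single pointwise computation above yields both directions of the biconditional simultaneously. I do not expect a genuine obstacle here; the only points requiring care are invoking the interval/complement equivalence so that $\mathbb{S}'$ is admissible, and checking that the nondegeneracy condition on the last two entries of $\beta'$ survives the swap.
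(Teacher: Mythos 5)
Your proof is correct and coincides with the paper's own treatment: the paper offers no separate argument, asserting only that the result follows immediately from Definition \ref{def2.3}, and your verification (complementing $S_n$ interchanges the last two rows of the defining table, swapping $b_n$ and $b_{n+1}$ restores the outputs, and the involution $(S_n^c)^c=S_n$ gives both directions at once) is precisely that immediate check. You also correctly read $\beta'$ as agreeing with $\beta$ in its first $n-1$ entries, quietly repairing the paper's off-by-one slip in writing $\beta'=(b_1,\ldots,b_{n-2},b_{n+1},b_n)$.
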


\begin{prop}\label{prop2.2} 
Let $f(x_{1},\ldots,x_{n})$ be $\{\sigma:\mathbb{S}:\beta\}$ NCF, i.e., $f$ is NCF in the variable order $x_{\sigma(1)},\ldots,x_{\sigma(n)}$, with canalizing input sets $\mathbb{S}=(S_1,\ldots,S_{n})$ and canalized output values $\beta=(b_{1},\ldots, b_{n+1})$. For $1\leq k\leq n-1$, fix $x_{\sigma(1)}={a_{1}}, \ldots,x_{\sigma(k)}={a_{k}}$, where $a_i\in {S_i}^c$.\\
Then the subfunction $f(x_{1},\ldots,\overset{\sigma(1)}{{a_{1}}},\ldots,\overset{\sigma(k)}{{a_{k}}},\ldots, x_{n})$ is $\{\sigma^{*}:\mathbb{S}^{*}:\beta^{*}\}$ NCF on the remaining variables, where $\sigma^{*}=(\sigma(k+1),\ldots,\sigma(n))$, ${\mathbb{S}}^{*}=(S_{k+1},\ldots,S_{n})$, 
and $\beta^{*}=(b_{k+1},\ldots,b_{n+1})$.
\end{prop}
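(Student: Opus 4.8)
The plan is to argue directly from the piecewise description in Definition \ref{def2.3}, by substituting the fixed values and reading off which branches survive. The crucial observation is that the hypothesis $a_i \in S_i^c$ says precisely that each guard $x_{\sigma(i)} \notin S_i$ holds for $i = 1, \ldots, k$. Consequently, in the nested branching that defines $f$, none of the first $k$ conditions $x_{\sigma(i)} \in S_i$ can fire, so the branches producing the outputs $b_1, \ldots, b_k$ become vacuous after the substitution.

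First I would write out the subfunction $g := f(x_1,\ldots,\overset{\sigma(1)}{a_1},\ldots,\overset{\sigma(k)}{a_k},\ldots,x_n)$ and substitute $x_{\sigma(i)} = a_i$ for $i \le k$ into each of the $n+1$ cases of Definition \ref{def2.3}. Every case with output $b_j$, $j \le k$, carries the requirement $x_{\sigma(j)} \in S_j$, which becomes $a_j \in S_j$, false by assumption, so that case drops out. For each case with output $b_j$, $j \ge k+1$, the conjunction of guards $x_{\sigma(1)} \notin S_1,\ldots,x_{\sigma(k)} \notin S_k$ becomes the tautology $a_1 \in S_1^c,\ldots,a_k \in S_k^c$ and may be deleted, leaving exactly the guards on the remaining variables $x_{\sigma(k+1)},\ldots,x_{\sigma(n)}$.

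Then I would compare the resulting piecewise description of $g$ against the template of Definition \ref{def2.3}, now read on the $n-k$ variables $x_{\sigma(k+1)},\ldots,x_{\sigma(n)}$. Reindexing shows that $g$ is nested canalizing in the variable order $\sigma^{*} = (\sigma(k+1),\ldots,\sigma(n))$, with canalizing input sets $\mathbb{S}^{*} = (S_{k+1},\ldots,S_n)$ and canalized outputs $\beta^{*} = (b_{k+1},\ldots,b_{n+1})$. The terminal non-degeneracy condition required of an NCF, namely $b_n \neq b_{n+1}$, is inherited verbatim, since the last two entries of $\beta^{*}$ coincide with the last two entries of $\beta$; note also that the hypothesis $k \leq n-1$ guarantees at least one surviving variable.

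I do not anticipate a genuine obstacle, as the content is careful bookkeeping of indices. If one prefers not to manipulate all $n+1$ cases simultaneously, an equally clean route is induction on $k$: the base case $k=1$ removes a single layer (the branch $b_1$ disappears and the guard $x_{\sigma(1)} \notin S_1$ becomes vacuous), yielding an NCF on $x_{\sigma(2)},\ldots,x_{\sigma(n)}$ with data $(\sigma(2),\ldots,\sigma(n))$, $(S_2,\ldots,S_n)$, $(b_2,\ldots,b_{n+1})$, and the inductive step peels off the next variable identically. The only point warranting a moment's care is that choosing $a_i \in S_i^c$ rather than $a_i \in S_i$ is exactly what makes the corresponding canalizing branch inactive; had some $a_i$ lain in $S_i$ with the earlier values still in their complements, the subfunction would instead have collapsed to the constant $b_i$.
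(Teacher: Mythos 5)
Your proof is correct and follows exactly the route the paper intends: the paper states Proposition \ref{prop2.2} without proof, treating it as immediate from Definition \ref{def2.3}, and your direct substitution argument---observing that $a_i \in S_i^c$ kills the first $k$ branches and renders their guards vacuous, while the condition $b_n \neq b_{n+1}$ is inherited verbatim---is precisely that omitted verification. Nothing is missing.
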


Note that all variables appearing in the definition of an NCF must be essential. A constant function $f=b$ is however $<i:a:b>$ canalyzing for any $i$ and $a$. Thus, when a function is said to be nested canalizing, we assume that this function is NCF in its essential variables.


\section{Characterization of  Nested Canalizing Functions}\label{3}
In the Boolean case, the extended monomial plays an important role in determining a novel polynomial form of NCF's \cite{Yua1}. In the multistate case, the product of indicator functions, which was used in \cite{Mur2}, will take over this role. 

\begin{defn}\label{def3.1}
Given a proper subset $S$ of $\Ff$,  the indicator function (of $S^c$) is defined as
\[Q_S(x)=\left\{\begin{array}[c]{ll}
0 & x\in S,\\
1 & x\in S^c.\end{array}\right.\]
\end{defn}

\begin{lemma}\label{lm3.1}
Let $a, b$ be any nonzero elements of $\Ff$, and let $S$ be any interval of $\Ff$. The number of different functions $f=bQ_S(x)+a$, which cannot be written as $cQ_{S'}(x)$, where $c\neq 0$ and $S'$ is an interval of $\Ff$, is
$(p-1)^2(p-2)$.
\end{lemma}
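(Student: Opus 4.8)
The plan is to first make concrete which functions the expression $f=bQ_S(x)+a$ produces, and then translate the obstruction ``cannot be written as $cQ_{S'}(x)$'' into a simple arithmetic condition on $a$ and $b$. Since $Q_S$ equals $0$ on $S$ and $1$ on $S^c$, the function $f=bQ_S+a$ takes the value $a$ on $S$ and the value $a+b$ on $S^c$. Because $b\neq 0$ these two values are distinct, so $f$ takes exactly two values, and one of them, namely $a$, is nonzero by hypothesis.

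Next I would characterize the writability condition. A function of the form $cQ_{S'}$ with $c\neq 0$ takes only the values $0$ (on $S'$) and $c$ (on $(S')^c$). Hence $f$ can be written this way only if $0$ is one of its two values; since $a\neq 0$, this forces $a+b=0$. Conversely, if $a+b=0$ then $f$ equals $a$ on $S$ and $0$ on $S^c$, so taking $S'=S^c$ (an interval, since the complement of an interval is an interval) and $c=a$ gives $f=cQ_{S'}$. Therefore the functions that cannot be so written are exactly those with $a\neq 0$, $b\neq 0$, and $a+b\neq 0$; equivalently, they are precisely the functions taking two distinct nonzero values, one on an interval $S$ and the other on $S^c$. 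Call these the \emph{good} functions.

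The core of the argument is then a bijective count of the good functions. Each such $f$ is determined by its two level sets together with the values assigned to them; since the level sets are complementary intervals, there is a unique cut point $j\in\{0,\ldots,p-2\}$ with $f$ constant on $\{0,\ldots,j\}$ and on $\{j+1,\ldots,p-1\}$. Thus I would set up a bijection between good functions and triples $(j,u,w)$, where $j$ ranges over the $p-1$ cut points and $(u,w)$ is an ordered pair of distinct nonzero values assigned to the lower and upper intervals respectively. Counting gives $p-1$ choices for $j$ and $(p-1)(p-2)$ choices for $(u,w)$, for a total of $(p-1)^2(p-2)$.

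The main point requiring care is to count distinct functions rather than parameter choices, i.e., to verify the bijection is well defined in both directions: that distinct triples $(j,u,w)$ yield distinct functions, and that every good function both arises from such a triple and admits a genuine representation $bQ_S+a$ with $a,b\neq 0$ (for the latter, taking $S$ to be the lower interval, $a=u$, $b=w-u$ works, with both nonzero). If one instead counts representations $(S,a,b)$ directly, the subtlety shifts to observing that each good function has exactly two of them—one with $S$ and one with $S^c$, the roles of the two values swapped—so that the raw count $2(p-1)\cdot(p-1)(p-2)$ of admissible triples must be divided by $2$, again yielding $(p-1)^2(p-2)$.
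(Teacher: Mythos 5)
Your proof is correct and follows essentially the same route as the paper's: both reduce the condition ``can be written as $cQ_{S'}(x)$'' to the arithmetic condition $a+b=0$, and then count the functions with $a$, $b$, and $a+b$ all nonzero, obtaining $(p-1)^2(p-2)$. The only cosmetic difference is that your primary count is an intrinsic bijection with triples (cut point $j$, ordered pair of distinct nonzero values), whereas the paper counts parameter triples $(S,a,b)$ and halves the $2(p-1)$ interval choices via the identity $bQ_S(x)+a=-bQ_{S^c}(x)+(a+b)$ --- the very alternative you mention at the end.
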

\begin{proof}
If a function $f(x)=bQ_S(x)+a$ can be written as $cQ_{S'}(x)$, then
\[f=bQ_S(x)+a=\left\{\begin{array}[c]{ll}%
a & x\in S\\
a+b & x\in S^c\end{array}\right.
=\left\{\begin{array}[c]{ll}%
0 & x\in S'\\
c & x\in {S'}^c\end{array}\right.
=cQ_{S'}(x).\]
Since $a$ and $c$ are nonzero, then $a+b = 0$ and, therefore, $a = -b$ must hold for such a function. 
Since $\Ff$ contains $p-1$ nonzero numbers, there are $p-1$ choices for $b$ and $p-2$ choices for $a$ to obtain $f\neq cQ_{S'}(x)$. 
Generally, there are $2(p-1)$ different intervals $S$ but only half of them lead to a different $f$ 
since every $f$ can be expressed in two different ways:
\[bQ_S(x)+a=b(1-Q_{S^c}(x))+a=-bQ_{S^c}(x)+(a+b).\]
Thus, there are $(p-1)(p-1)(p-2)=(p-1)^2(p-2)$ different functions $f=bQ_S(x)+a$ that cannot be written as $cQ_{S'}(x)$.
\end{proof}

From the above proof, it is clear that $bQ_S(x)+a=cQ_{S'}(x)$ for some $c\neq 0$ and $S'$ if and only if $a+b=0$.

\begin{lemma}\label{lm3.2}
Given $a,b \neq 0$ and intervals $S_i$, $i=1,\ldots,k$ with $k\geq 2$, then
\begin{enumerate}
\item $f(\xx)=f(x_1,\ldots,x_k)=b\prod_{j=1}^k Q_{S_j}(x_j)+a$ cannot be written as $c\prod_{j=1}^k Q_{S_j'}(x_j)$, where $c\neq 0$ and all $S_j'$ are intervals, $j=1,\ldots, k$.
\item There are $2^{k}(p-1)^{k+2}$ different functions of the form $b\prod_{j=1}^k Q_{S_j}(x_j)+a$.
\end{enumerate}
\end{lemma}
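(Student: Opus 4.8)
The plan is to reduce both parts to a single combinatorial observation about ``boxes'' (product sets) in $\Ff^{k}$. First I would record the shape of $f$: writing $B=\prod_{j=1}^{k}S_{j}^{c}$, the product $\prod_{j}Q_{S_{j}}(x_{j})$ equals $1$ exactly on $B$ and $0$ on $B^{c}$, so $f=b\prod_{j}Q_{S_{j}}(x_{j})+a$ takes the value $a+b$ on $B$ and the value $a$ on $B^{c}$. Since each $S_{j}$ is an interval, $S_{j}^{c}$ is a nonempty proper subset of $\Ff$, so $B$ and $B^{c}$ are both nonempty; and since $b\neq 0$, the two values $a$ and $a+b$ are distinct. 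Thus $f$ is a genuinely two-valued function whose level sets are exactly $B$ (a box) and $B^{c}$.

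The key lemma I would isolate is: for $k\geq 2$, the complement $B^{c}$ of a nonempty proper box $B=\prod_{j}S_{j}^{c}$ is not itself a box. I would prove this by a projection argument. For each coordinate $j$, the projection $\pi_{j}(B^{c})$ is all of $\Ff$: given any $v\in\Ff$, choose (using $k\geq 2$) an index $i\neq j$ and a value in the nonempty set $S_{i}$ for the $i$-th coordinate; the resulting point lies outside $B$, hence in $B^{c}$, and has $j$-th coordinate $v$. If $B^{c}$ were a box it would therefore equal $\prod_{j}\pi_{j}(B^{c})=\Ff^{k}$, forcing $B=\emptyset$, a contradiction. This is the step where the hypothesis $k\geq 2$ is essential (for $k=1$ one has $B^{c}=S_{1}$, which is why Lemma \ref{lm3.1} behaves differently).

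For part (1), suppose toward a contradiction that $f=c\prod_{j}Q_{S_{j}'}(x_{j})$ with $c\neq 0$; this right-hand side takes the value $c$ on the box $B'=\prod_{j}{S_{j}'}^{c}$ and $0$ on the nonempty set ${B'}^{c}$. If $a+b\neq 0$, then $f$ is nowhere zero while the right-hand side vanishes on ${B'}^{c}$, a contradiction. If $a+b=0$, then the zero set of $f$ is exactly $B$, so matching zero sets gives $B={B'}^{c}$; but then the nonempty proper box $B$ equals the complement of the box $B'$, contradicting the key lemma. This disposes of part (1).

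For part (2), I would argue that the assignment $(a,b,S_{1},\dots,S_{k})\mapsto f$, with $a,b\in\Ff\setminus\{0\}$ and each $S_{j}$ an interval, is a bijection onto the functions in question. Injectivity follows from the level-set description together with the key lemma: the function $f$ has exactly one level set that is a box, namely $B$, on which its value is $a+b$, and one non-box level set $B^{c}$, on which its value is $a$; hence $a$, then $b=(a+b)-a$, and then each $S_{j}=(\pi_{j}B)^{c}$ are all recovered uniquely from $f$. Counting the parameters then finishes: there are $p-1$ choices for $a$, $p-1$ for $b$, and $2(p-1)$ intervals for each of the $k$ sets $S_{j}$, giving $(p-1)^{2}\bigl(2(p-1)\bigr)^{k}=2^{k}(p-1)^{k+2}$. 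The main obstacle is the key lemma of the second paragraph; once the box/non-box dichotomy is in hand, both the non-representability in (1) and the uniqueness in (2) are immediate.
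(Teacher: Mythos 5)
Your proof is correct and follows essentially the same route as the paper's: describe $f$ as a two-valued function whose level sets are the box $B=\prod_{j}S_{j}^{c}$ and its complement, force $a+b=0$ in part (1), and reduce both the non-representability in (1) and the injectivity count in (2) to the fact that for $k\geq 2$ the complement of a nonempty proper box is not a box. The only difference is one of completeness: your projection argument actually proves this key fact, which the paper merely asserts (``This last statement is, however, impossible''), and your explicit parameter-recovery argument spells out the injectivity that the paper's count in part (2) states only in passing.
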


\begin{proof}
(1) As in the previous lemma, if a function $f(\xx)=b\prod_{j=1}^k Q_{S_j}(x_j)+a$ can be written as $c\prod_{j=1}^k Q_{S_j'}(x_j)$, then
\begin{align*}
b\prod_{j=1}^k Q_{S_j}(x_j)+a &= \left\{\begin{array}[c]{ll}%
a & \exists j: x_j\in S_j\\
a+b & \forall j: x_j \in S_j^c \end{array}\right. \\
&= \left\{\begin{array}[c]{ll}%
a & \xx \in (S_1 \times \Ff^{k-1}) \cup \ldots \cup (\Ff^{k-1} \times S_k)\\
a+b & \xx \in S_1^c \times \ldots \times S_k^c \end{array}\right. \\
&=\left\{\begin{array}[c]{ll}%
0 & \xx \in ({S_1'} \times \Ff^{k-1}) \cup \ldots \cup (\Ff^{k-1} \times {S_k'})\\
c & \xx \in {S_1'}^c \times \ldots \times {S_k'}^c \end{array}\right.\\
&=c\prod_{j=1}^k Q_{S_j'}(x_j).
\end{align*}
Since $a,c\neq 0$, $a+b=0$ must hold. Hence, 
\[S_1^c \times \cdots \times S_k^c = ({S_1'} \times \Ff^{k-1}) \cup \cdots \cup (\Ff^{k-1} \times {S_k'}).\]
This last statement is, however, impossible. Thus, there is no function $f(\xx)=b\prod_{j=1}^k Q_{S_j}(x_j)+a$ that can be written as $c\prod_{j=1}^k Q_{S_j'}(x_j)$.

(2) The nonzero constants $a,b$ can be arbitrarily chosen, with $p-1$ choices each. Contrary to the previous lemma, each choice of intervals $S_1, \ldots, S_k$ leads to a different function because $S_1^c \times \ldots \times S_k^c \neq ({S_1} \times \Ff^{k-1}) \cup \ldots \cup (\Ff^{k-1} \times {S_k})$ and because $a \neq a+b$. For each interval there are $2(p-1)$ choices, which is why altogether there are $(p-1)(p-1)(2(p-1))^k = 2^k(p-1)^{k+2}$ different functions of the form $b\prod_{j=1}^k Q_{S_j}(x_j)+a$.
\end{proof}

The following theorem states the main result of this section. It gives an algebraic characterization of nested canalizing functions.

\begin{theorem}\label{th2}
\label{th3.2} Given $n\geq2$, the function $f(x_{1},\ldots,x_{n})$ is nested canalizing if and only if  it can be uniquely written as
\begin{equation}\label{eq3.1}
f(x_{1},\ldots,x_{n})=M_{1}(M_{2}(\cdots(M_{r-1}(B_{r+1}M_{r}+B_r )+B_{r-1})\cdots)+B_2)+B_1,
\end{equation}
where each $M_{i}$ is a product of indicator functions of a set of disjoint variables. More precisely, $M_{i}=\prod_{j=1}^{k_{i}}(Q_{S_{i_j}}(x_{i_j})), i=1,\ldots,r, k_{i}\geq1$ for $i=1,\ldots,r, k_{1} + \cdots + k_{r}=n, B_2,\ldots,B_{r+1}\neq 0, B_1\in \Ff,\{i_{j} \big| j=1,\ldots,k_{i}, i=1,\ldots,r\}=\{1,\ldots,n\}$, and, if $k_r=1, then B_{r+1}+B_r\neq 0$.
\end{theorem}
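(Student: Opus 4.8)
The plan is to prove the two implications separately and then establish uniqueness by induction on $n$, treating the products $M_i$ as ``layers'' of equally dominant variables. Throughout I would lean on the recursive structure of Proposition \ref{prop2.2}: fixing the most dominant variables to non-canalizing values turns an NCF into an NCF on the remaining variables, with the leading canalized outputs stripped off. The guiding dictionary is that the layer outputs $c_1,\dots,c_r,c_{r+1}$ (the common canalized value on each layer, together with the final default value) relate to the coefficients by $B_1=c_1$, $B_t=c_t-c_{t-1}$ for $2\le t\le r$, and $B_{r+1}=c_{r+1}-c_r$; the hypotheses $B_2,\dots,B_{r+1}\neq0$ then correspond exactly to consecutive layer outputs being distinct.

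For the direction ``form $\Rightarrow$ NCF'', I would read off the nested canalizing structure directly. If any variable occurring in $M_1$ lies in its canalizing set then $M_1=0$ and the whole expression collapses to $f=B_1$; hence the $k_1$ variables of $M_1$ are jointly most dominant, each with canalizing input set $S_{1_j}$ and common canalized output $B_1$. On the complementary event $M_1=1$ the expression reduces to $M_2(\cdots)+B_1+B_2$, which is a form of the same type in the remaining $n-k_1$ variables with new base constant $B_1+B_2=c_2$. Iterating (formally, inducting on $r$) exhibits $f$ as nested canalizing in any variable order refining the layer order, with canalized outputs $c_1,\dots,c_r$ and default $c_{r+1}$; since $B_{r+1}\neq0$ we get $c_r\neq c_{r+1}$, i.e. $b_n\neq b_{n+1}$, as required. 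One also checks every variable is essential, which holds because all $B_i$ are nonzero, so switching a single factor $Q_S$ between $0$ and $1$ genuinely changes $f$ on a suitable assignment of the other variables.

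For the converse I would start from the nested form of an NCF with outputs $b_1,\dots,b_{n+1}$ and group $x_{\sigma(1)},\dots,x_{\sigma(n)}$ into maximal runs on which the canalized output is constant; these runs become the layers, $M_i$ being the product of the corresponding indicator functions. Peeling one run at a time and invoking Proposition \ref{prop2.2} to control the restricted function, a short computation produces the nested polynomial with $B_t=c_t-c_{t-1}$ as above, the run structure guaranteeing $c_t\neq c_{t+1}$ and hence $B_2,\dots,B_{r+1}\neq0$. The one normalization needed is when the last run is a single variable with $c_{r+1}=c_{r-1}$, i.e. $B_{r+1}+B_r=0$: then $B_{r+1}Q_{S_r}+B_r=-B_{r+1}Q_{S_r^c}$ has no constant term, so by the identity $Q_S=1-Q_{S^c}$ (equivalently Proposition \ref{prop2.1}) this lone variable is absorbed into the previous layer, producing a genuine layer of size $k_{r-1}+1\ge2$ and restoring the condition ``$k_r=1\Rightarrow B_{r+1}+B_r\neq0$''.

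Uniqueness is the crux, and I would prove it by induction on $n$ through an intrinsic description of the top layer. The claim is that $x_i$ belongs to $M_1$ precisely when there exists $a\in\Ff$ with $f|_{x_i=a}$ constant: if $x_i$ lies in the first layer, any canalizing value forces $f\equiv B_1$; conversely, if $x_i$ sits in a deeper layer then---using $B_r\neq0$ and, for a single-variable bottom layer, the normalization $B_{r+1}+B_r\neq0$---fixing $x_i$ alone leaves the still-canalizing outer layers free, so $f|_{x_i=a}$ is never constant. This pins down the variable set of $M_1$, the value $B_1$, and each interval $S_{1_j}=\{a: f|_{x_{1_j}=a}\equiv B_1\}$; Proposition \ref{prop2.2} shows the restriction to $M_1=1$ is a well-defined NCF in fewer variables, to which the induction hypothesis applies. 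The main obstacle, and the place where genuine care is required, is the innermost layer: when $k_r=1$ the lone bottom interval $S_r$ may be traded for $S_r^c$ with the compensating change $B_r\mapsto B_{r+1}+B_r$, $B_{r+1}\mapsto -B_{r+1}$ (exactly Proposition \ref{prop2.1}), and both choices can meet the stated constraints, so the decomposition is determined only up to this single residual involution together with the normalization that makes the layer count $r$ maximal. By contrast, Lemma \ref{lm3.2} rules out any rewriting of a layer of size at least $2$, so once the bottom interval is treated the induction closes and the data $(M_i,B_i)$ are forced.
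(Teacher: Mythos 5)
Your overall architecture coincides with the paper's: both directions of the equivalence are proved exactly as in the paper (partial sums $c_i=B_1+\cdots+B_i$ for ``form $\Rightarrow$ NCF''; grouping $b_1,\ldots,b_n$ into maximal constant runs for the converse), and your uniqueness argument rests on the same key observation as the paper's, namely that the variables of $M_1$ are precisely the canalizing variables of $f$ (you phrase this as ``$f|_{x_i=a}$ constant for some $a$''), with Proposition \ref{prop2.2} used to peel off the first layer; the paper iterates where you induct, an inessential difference. You are in fact more careful than the paper in the converse direction: the paper's proof never checks that the run decomposition satisfies the side condition ``$k_r=1\Rightarrow B_{r+1}+B_r\neq 0$'', whereas you explicitly absorb a lone bottom variable with $c_{r+1}=c_{r-1}$ into the previous layer via Proposition \ref{prop2.1} (the paper only gestures at this in Remark \ref{remark2}).

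The substantive divergence is your closing claim that uniqueness holds only up to the bottom-layer involution $S_r\mapsto S_r^c$, $B_r\mapsto B_r+B_{r+1}$, $B_{r+1}\mapsto -B_{r+1}$ when $k_r=1$ --- and here you are right, and the theorem as literally stated fails for $p>2$. Both members of this orbit satisfy every stated constraint: the flipped coefficients are nonzero ($B_r+B_{r+1}\neq 0$ is the original normalization, $-B_{r+1}\neq 0$ is automatic), and the flipped normalization sum equals $B_r\neq 0$ (note $r\geq 2$, since $r=k_r=1$ would force $n=1$). Concretely, over $\Ff_3$ the function $Q_{\{0\}}(x_1)\bigl(Q_{\{0\}}(x_2)+1\bigr)$ also equals $Q_{\{0\}}(x_1)\bigl(2\,Q_{\{1,2\}}(x_2)+2\bigr)$, and both representations meet all hypotheses of the theorem ($B_2,B_3\neq 0$ and $B_3+B_2\neq 0$ in each). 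The paper's proof glosses over exactly this point: after peeling off $r-1$ layers the residual object is a function of one variable, for which Proposition \ref{prop*} no longer applies (a nonconstant one-variable function is $<1:a:b>$ canalizing with two different outputs $b$), so the sentence ``in the same way, the uniqueness of $M_2,\ldots,M_r$ and $B_2,\ldots,B_{r+1}$ can be shown'' breaks at the last step when $k_r=1$. Notably, Lemma \ref{lm3.1} --- which halves the number of intervals precisely because $bQ_S(x)+a=-bQ_{S^c}(x)+(a+b)$ --- already encodes this two-to-one ambiguity, which is why Corollary \ref{co3.1} still counts functions correctly even though the representation is not unique. Two small corrections to your write-up: in the Boolean case the involution is vacuous (there $k_r=1$ cannot occur, so the theorem is literally true for $p=2$), and your phrase ``the normalization that makes the layer count $r$ maximal'' has it backwards --- the condition $B_{r+1}+B_r\neq 0$ rules out the collapsible $r$-layer form in favour of the merged $(r-1)$-layer one, i.e., it selects the representation with the smallest number of layers.
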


\begin{proof}
First, let $b_i = \sum_{j=1}^i B_j$. Then it is straightforward to check that any function written as in Equation \ref{eq3.1} 
is a $\{\sigma':\mathbb{S'}:\beta'\}$ NCF, where
$\sigma'(x_1,\ldots,x_n)=(x_{1_1},\ldots,x_{1_{k_1}},\ldots,x_{r_1},\ldots,x_{r_{k_r}})$, $\mathbb{S'}=(S_{1_1},\ldots,S_{1_{k_1}},\ldots,S_{r_1},\ldots,S_{r_{k_r}})$, and
$\beta'=(\underbrace{b_1,\ldots,b_1}_{k_1},\underbrace{b_2,\ldots,b_2}_{k_2},\ldots,\underbrace{b_r,\ldots,b_r}_{k_r},b_{r+1})$.

Second, suppose $f$ is a $\{\sigma:\mathbb{S}:\beta\}$ NCF, where  $\mathbb{S}=(S_{1},S_{2},\ldots,S_{n})$ and $\beta=(b_{1},b_{2},\ldots,b_{n+1})$ with $b_n\neq b_{n+1}$. Then there exist $k_i,i=1,\ldots,r$, $k_1+\cdots+k_r=n$, $k_i\geq 1$, such that $b_1=\cdots=b_{k_1}=:C_1, b_{k_1+1}=\cdots=b_{k_1+k_2}=:C_2, \ldots, b_{k_1+\cdots+k_{r-1}+1}=\cdots=b_n=:C_r, b_{n+1}=:C_{r+1}$, 
and $C_j \neq C_{j+1}, j=1,\ldots, r$.
Let $B_1:=C_1, B_2:=C_2-C_1, \ldots, B_{r+1}=C_{r+1}-C_r$. Then $B_1\in \Ff, B_2,\ldots,B_{r+1} \in \Ff - \{0\}$. 
It is straightforward to check that $M_{1}(M_{2}(\cdots(M_{r-1}(B_{r+1}M_{r}+B_r )+B_{r-1})\cdots)+B_2)+B_1$, 
with these constants $B_1, \ldots, B_{r+1}$, now equals $f$, which shows that any NCF can be written as in Equation \ref{eq3.1}.

Finally, we need to show that each NCF has a unique polynomial representation, as indicated. 
Without loss of generality, let $\sigma$ be the identity permutation, i.e., let $f$ be nested canalizing in the variable order $x_1,\ldots,x_n$. Besides, let $f$ be of the form in Equation \ref{eq3.1}. Then all the variables of $M_1$, $x_1,\ldots,x_{k_1}$, are canalizing variables of $f$ with common canalized output $B_1$. We will now show that $x_1,\ldots,x_{k_1}$ are all the canalizing variables of $f$ to prove the uniqueness of $M_1$ and $B_1$.
If $x_1\in S_1^c,\ldots,x_{k_1}\in S_{k_1}^c$, then all the variables of $M_2$, $x_{k_1+1},\ldots,x_{k_2}$, are canalizing variables of the subfunction $f_1=M_{2}(\cdots(M_{r-1}(B_{r+1}M_{r}+B_r )+B_{r-1})\cdots)+(B_2+B_1)$. Since $B_1\neq B_1+B_2$, by Proposition \ref{prop*}, $x_{k_1+1},\ldots,x_{k_2}$ are not canalizing variables of $f$. In the same manner, all the variables of $M_3$ are not canalizing variables of $f_1$ and thus not canalizing variables of $f$ either, since a subfunction of a subfunction is also a subfunction of the original function. 
Iteratively, we can prove that $x_1,\ldots,x_{k_1}$ are the only canalizing variables of $f$, which makes $M_1$ and $B_1$ unique. In the same way, the uniqueness of $M_2, \ldots, M_r$ and $B_2, \ldots, B_{r+1}$ can be shown.\end{proof}

Because each NCF can be uniquely written in the form of Equation \ref{eq3.1}, 
the number $r$ is uniquely determined by $f$, and can be used to specify the class of nested canalizing functions. 

\begin{defn}\label{def3.3} 
For an NCF $f$, written in the form of Equation \ref{eq3.1}, let the number $r$ be called its \emph{layer number}. 
Essential variables of $M_{1}$ are called most dominant variables (canalizing variables), and are part of the first layer of $f$. Essential variables of $M_{2}$ are called second most dominant variables, and are part of the second layer. Iteratively, essential variables of $M_{k}$ are called $k$ most dominant variables, and are part of the $k^{\text{th}}$ layer.
\end{defn}

\begin{remark}\label{remark2}
In the Boolean case ($p=2$), each $M_i$ in Theorem \ref{th2} is an extended monomial, and, since $B_{r+1}+B_r=1+1=0$, $k_r$ is greater than 1. Thus, Theorem \ref{th2} reduces to its Boolean version, already stated as Theorem 4.2 in \cite{Yua1}. On the other hand, if $B_{r+1}+B_r$ would be zero, then $k_r=1$ is impossible since the function could be uniquely written in $r-1$ layers by the comments of Lemma \ref{lm3.1}. 
\end{remark}

A function can be nested canalizing in different variable orders but only all variables in the same layer can be reordered. More precisely, we have the following result.

\begin{cor}\label{CoA}
Let $\sigma$ and $\eta$ be two permutations of $\{1,\ldots,n\}$, and let $f$ be both $\{\sigma: \mathbb{S}: \beta\}$ NCF and $\{\eta: \mathbb{S}': \beta'\}$ NCF, then
\begin{enumerate}
\item \hspace{0.7in}$\{x_{\sigma(1)},\ldots, x_{\sigma(k_1)}\}=\{x_{\eta(1)},\ldots, x_{\eta(k_1)}\}$\\
\ \hspace{0.7in} $\vdots$\\
$\{x_{\sigma(k_1+\cdots+k_{r-1}+1)},\ldots, x_{\sigma(n)}\}=\{x_{\eta(k_1+\cdots+k_{r-1}+1)},\ldots, x_{\eta(n)}\}.$
\item $\beta=\beta'=(b_1,\ldots,b_n, b_{n+1})$ \text{and}\  $b_1=\cdots =b_{k_1}, \ldots , b_{k_1+\cdots+k_{r-1}+1}=\cdots =b_n$
\item $b_1\neq b_{k_1+1}, b_{k_1+1} \neq b_{k_1+k_2+1}, \ldots b_{k_1+\cdots +k_{r+1}+1}\neq b_{n+1}.$
\end{enumerate}
\end{cor}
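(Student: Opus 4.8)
The plan is to leverage the uniqueness part of Theorem~\ref{th2}, which already does most of the heavy lifting. The key observation is that Corollary~\ref{CoA} is really a statement about how the unique polynomial representation in Equation~\ref{eq3.1} relates to the various $\{\sigma:\mathbb{S}:\beta\}$ representations of the same function. Since $f$ is NCF, by Theorem~\ref{th2} it has a unique representation $f=M_1(M_2(\cdots(B_{r+1}M_r+B_r)\cdots)+B_2)+B_1$, and this representation is intrinsic to $f$, independent of which variable order we start from. So my first step is to observe that \emph{both} the $\sigma$-representation and the $\eta$-representation must, after grouping consecutive equal canalized outputs into layers, produce exactly this same polynomial form with the same $M_1,\ldots,M_r$, the same $B_1,\ldots,B_{r+1}$, hence the same layer sizes $k_1,\ldots,k_r$ and the same layer constants.

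Concretely, I would first recall from the proof of Theorem~\ref{th2} that the passage from a $\{\sigma:\mathbb{S}:\beta\}$ representation to the layered polynomial form is governed entirely by runs of equal values in $\beta$: one sets $C_1,\ldots,C_{r+1}$ to be the distinct consecutive values of the $b_i$, with the $j$th run having length $k_j$. The uniqueness of $M_1,\ldots,M_r$ then forces the set of variables in each $M_i$ to be determined by $f$ alone. This immediately gives part~(1): the variables appearing in $M_i$ form the $i$th layer, and the argument in the uniqueness proof (identifying $x_1,\ldots,x_{k_1}$ as precisely the canalizing variables of $f$, then iterating on subfunctions) shows that $\{x_{\sigma(k_1+\cdots+k_{i-1}+1)},\ldots,x_{\sigma(k_1+\cdots+k_i)}\}$ equals the essential-variable set of $M_i$, which does not depend on whether we used $\sigma$ or $\eta$. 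Hence these variable blocks coincide, giving the displayed equalities. For part~(2), the canalized output values are exactly the partial sums $b_i=\sum_{j\le \ell}B_j$ where $\ell$ indexes the layer containing position $i$; since the $B_j$ are determined by $f$, so is $\beta$, forcing $\beta=\beta'$, and the equalities $b_1=\cdots=b_{k_1}$, etc., are just the statement that $\beta$ is constant on each layer (each run has the common value $C_\ell$). Part~(3) is then the condition $C_\ell\neq C_{\ell+1}$, i.e.\ $B_{\ell+1}=C_{\ell+1}-C_\ell\neq 0$, which is exactly the nonvanishing of $B_2,\ldots,B_{r+1}$ guaranteed by Theorem~\ref{th2}; in the displayed indices this reads $b_1\neq b_{k_1+1}$, $b_{k_1+1}\neq b_{k_1+k_2+1}$, and so on.

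The one place requiring genuine care, rather than bookkeeping, is justifying that the layer \emph{decomposition itself}---the numbers $r$ and $k_1,\ldots,k_r$---is the same for the $\sigma$ and $\eta$ representations \emph{before} I invoke uniqueness, since a priori the two representations could group the variables differently. The clean way around this is to not treat $\sigma$ and $\eta$ symmetrically from the outset but to route both through the canonical form: each of $\{\sigma:\mathbb{S}:\beta\}$ and $\{\eta:\mathbb{S}':\beta'\}$ produces, by the second part of the proof of Theorem~\ref{th2}, a representation of $f$ in the form of Equation~\ref{eq3.1}; by the uniqueness (third part of that proof) these two canonical forms are \emph{identical}. From a single canonical form one reads off a single value of $r$, a single sequence $k_1,\ldots,k_r$, and single sets $M_1,\ldots,M_r$ and constants $B_1,\ldots,B_{r+1}$. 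Everything in (1)--(3) is then extracted from this common canonical form, with the only subtlety being the translation between the ``additive'' description via the $B_j$ and $M_i$ and the ``output'' description via the runs of $\beta$.

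Thus the main obstacle is purely organizational: making sure the identification of each layer's variable set is phrased as a property of $f$ (via the inductive canalizing-variable argument already present in the proof of Theorem~\ref{th2}) rather than as a property of a particular permutation, so that the coincidence of the $\sigma$- and $\eta$-blocks is a consequence of uniqueness and not an additional hypothesis. Once that is set up, (1), (2), and (3) all follow by unwinding the definitions $B_1=C_1$, $B_{\ell+1}=C_{\ell+1}-C_\ell$ together with $C_\ell\neq C_{\ell+1}$, and no further computation is needed.
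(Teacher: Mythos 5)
Your proposal is correct and takes essentially the same route as the paper: the paper's entire proof of Corollary~\ref{CoA} is the one-line observation that by Theorem~\ref{th3.2} the function $f$ has a unique representation in the form of Equation~\ref{eq3.1}, from which all the stated equalities follow. Your write-up is simply a careful elaboration of that argument---routing both the $\sigma$- and $\eta$-representations through the common canonical form, and reading off the layer variable sets from the $M_i$, the outputs from the partial sums of the $B_j$, and the inequalities from $B_2,\ldots,B_{r+1}\neq 0$---so it fills in exactly the bookkeeping the paper leaves implicit.
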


\begin{proof}
By Theorem \ref{th3.2}, $f$ can be written as a unique polynomial of the form of Equation \ref{eq3.1}, 
and all equalities follow from this representation.
\end{proof}

This corollary allows to determine the layer number of any NCF by only looking at its canalizing output values. For example, if $p=3$ and if $f$ is nested canalizing with canalized output values $\beta = (1,1,1,0,0,0,2,0,0,2,2,1)$ ($n=11$), then the layer number of $f$ is $5$.

Let $\mathbb{NCF}(n)$ denote the set of all nested canalizing functions in $n$ variables.

\begin{cor}
\label{co3.1} 
For $n\geq2$, the number of nested canalizing functions is given by
\begin{align*}
|\mathbb{NCF}(n)|&=2^{n-1}p(p-2)\sum_{\substack{r=2}}^{n}(p-1)^{n+r-1}\sum_{\substack{k_{1}+\cdots+k_{r-1}=n-1\\k_{i}\geq1,i=1,\ldots,r-1}}\frac{n!}{k_{1}!k_{2}!\cdots k_{r-1}!}\\
&\: +2^{n}p\sum_{\substack{r=1}}^{n-1}(p-1)^{n+r}\sum_{\substack{k_{1}+\cdots+k_{r}=n\\k_{i}\geq1,i=1,\ldots,r-1,k_{r}\geq2}}\frac{n!}{k_{1}!k_{2}!\cdots k_{r}!}
\end{align*}
\end{cor}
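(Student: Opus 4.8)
The plan is to count nested canalizing functions by exploiting the unique normal form supplied by Theorem \ref{th3.2}. By that theorem every $f \in \mathbb{NCF}(n)$ is encoded by the data consisting of a layer number $r$, a composition $k_1 + \cdots + k_r = n$ with each $k_i \ge 1$, an assignment of the $n$ variables to the $r$ ordered layers, an interval $S_{i_j}$ for each variable, and constants $B_1 \in \Ff$, $B_2, \ldots, B_{r+1} \in \Ff \setminus \{0\}$. So I would sum over $r$ and over compositions, and for each fixed composition multiply the independent choices: the variables are distributed into the ordered layers in $\tfrac{n!}{k_1!\cdots k_r!}$ ways (within a layer the product $M_i$ is symmetric, so only the assignment matters, which is why the coefficient is multinomial and not $n!$); each of the $n$ variables is given one of the $2(p-1)$ intervals of $\Ff$, contributing $2^n(p-1)^n$; and $B_1$ has $p$ choices while each of $B_2, \ldots, B_{r+1}$ has $p-1$ choices, subject to the extra clause of Theorem \ref{th3.2}.

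The whole subtlety, and the reason the stated formula is a sum of two pieces rather than one, is the clause ``if $k_r = 1$ then $B_{r+1}+B_r \ne 0$.'' This forces a split according to whether the innermost layer has one variable or at least two. I would first dispose of the case $k_r \ge 2$. Here $r$ runs from $1$ to $n-1$, the normal form is genuinely unique (for an innermost layer of $\ge 2$ factors, Lemma \ref{lm3.2} forbids any rewriting, and by Remark \ref{remark2} flipping the last interval would produce a representation violating the nonvanishing clause, hence not a competing one), and the only condition on the constants is nonvanishing of $B_2, \ldots, B_{r+1}$. Thus each tuple gives a distinct function and the per-composition count is $2^n(p-1)^n \cdot p(p-1)^r \cdot \tfrac{n!}{k_1!\cdots k_r!} = 2^n p (p-1)^{n+r}\tfrac{n!}{k_1!\cdots k_r!}$, which after summation is exactly the second displayed sum.

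For the case $k_r = 1$, the index $r$ runs from $2$ to $n$ (the value $r=1$ would force $n=1$), and two things happen at once. First, the constraint $B_{r+1} \ne -B_r$ together with $B_{r+1} \ne 0$ leaves $p-2$ choices for $B_{r+1}$, while $B_1$ has $p$ and $B_2, \ldots, B_r$ have $(p-1)^{r-1}$; this is the origin of the factor $p(p-2)(p-1)^{r-1}$. Second, and this is the crux, a single-variable innermost layer is \emph{not} rigid: exactly as in Lemma \ref{lm3.1} and Proposition \ref{prop2.1}, one has the identity $B_{r+1}Q_{S_r}(x_r)+B_r = -B_{r+1}Q_{S_r^c}(x_r)+(B_{r+1}+B_r)$, so the interval of the last variable may be replaced by its complement at the cost of adjusting $B_r, B_{r+1}$, and one checks that the adjusted tuple again satisfies all the constraints. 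Hence each such function arises from exactly two of the tuples counted above, the division by $2$ converts the factor $2^n$ into $2^{n-1}$, and the per-composition count becomes $2^{n-1}p(p-2)(p-1)^{n+r-1}\tfrac{n!}{k_1!\cdots k_{r-1}!}$, whose summation is the first displayed sum. Adding the two cases gives $|\mathbb{NCF}(n)|$.

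The step I expect to be the main obstacle is precisely this bookkeeping of the last interval in the $k_r=1$ case: one must argue that the flip $S_r \leftrightarrow S_r^c$ is the \emph{only} redundancy (so the overcount is exactly a factor of two, not more), that it genuinely preserves the defining constraints (so both representatives legitimately sit inside the count), and that no analogous doubling occurs when $k_r \ge 2$. Establishing this cleanly means reading the uniqueness argument of Theorem \ref{th3.2} together with Proposition \ref{prop2.1} and Remark \ref{remark2}, and it is exactly what pins down the asymmetric exponents $2^{n-1}$ versus $2^n$ in the two sums.
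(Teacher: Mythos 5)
Your proposal is correct and follows essentially the same route as the paper: both enumerate the unique normal form of Theorem \ref{th3.2} layer by layer, multiplying multinomial coefficients for the variable assignment by independent choices of intervals and constants, and splitting into the cases $k_r\geq 2$ (including $r=1$) and $k_r=1$. Your count-tuples-then-divide-by-two treatment of the flip $S_r\leftrightarrow S_r^c$ in the $k_r=1$ case is precisely the content of Lemma \ref{lm3.1}, which the paper invokes to supply the factor $(p-1)^2(p-2)$, so the two arguments coincide.
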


\begin{proof}
If $r=1$, then $f=B_2M_1+B_1$. Similar to Lemma \ref{lm3.2}, the number of such functions is $(2(p-1))^n(p-1)p=2^n(p-1)^{n+1}p$, 
since $B_1 \in \Ff$ can be arbitrarily chosen, and $k_1=n$.

For $r>1$, Equation \ref{eq3.1} yields that for each choice of $k_{1},\ldots, k_{r}$, $k_{i}\geq1$, $i=1,\ldots,r$, there are $(2(p-1))^{k_{j}}\binom{n-k_{1}-\cdots-k_{j-1}}{k_{j}}$ ways to form $M_{j}$, $j=1,\ldots,r$. For those NCF's with $k_r=1$, by Lemma \ref{lm3.1}, there are $(p-1)^2(p-2)$ different functions of the form $B_{r+1}M_r+B_r$ with $B_r\neq 0$. For the remaining NCF's, i.e. those with $k_r>1$, Lemma \ref{lm3.2} yields that there are $(p-1)^2(2(p-1))^{k_r}$ ways to form $B_{r+1}M_r+B_r$, with $B_r\neq 0$.

Note that there are $p-1$ choices for each $B_i$, $2\leq r\leq B_{r-1}$, $p$ choices for $B_1$, and $2(p-1)$ choices for each canalizing input interval. Hence, the total number of NCF's with $r>1, k_r=1$, can be given by
{\small
\begin{align*}
N_1&=\sum_{r=2}^n\sum_{\substack{k_{1}+\cdots+k_{r-1}=n-1\\k_{i}\geq1,i=1,\ldots,r-1}}(2(p-1))^{k_{1}+\cdots+k_{r}}\binom{n}{k_{1}}\binom{n-k_{1}}{k_{2}}\cdots\binom{n-k_{1}-\cdots-k_{r-2}}{k_{r-1}}(p-1)^2(p-2)(p-1)^{r-2}p\\
&=2^{n-1}p(p-2)\sum_{r=2}^n\sum_{\substack{k_{1}+\cdots+k_{r-1}=n-1\\k_{i} \geq1,i=1,\ldots,r-1}}(p-1)^{n+r-1}\frac{n!}{(k_{1})!(n-k_{1})!}\frac{(n-k_{1})!}{(k_{2})!(n-k_{1}-k_{2})!}\frac{(n-k_{1}-\cdots-k_{r-2})!}{k_{r-1}!(n-k_{1}-\cdots-k_{r-1})!}\\
&=2^{n-1}p(p-2)\sum_{r=2}^n(p-1)^{n+r-1}\sum_{\substack{k_{1}+\cdots+k_{r-1}=n-1\\ k_{i}\geq1,i=1,\ldots,r-1}}\frac{n!}{k_{1}!k_{2}!\cdots k_{r-1}!}
\end{align*}}
Similarly, the total number of NCF's with $r>1, k_r>1$ is

{\small \begin{align*}
N_2&=\sum_{r=2}^{n-1}\sum_{\substack{k_{1}+\cdots+k_{r}=n\\k_{i}\geq1,i=1,\ldots,r-1,k_r\geq 2}}(2(p-1))^{k_{1}+\cdots+k_{r}}\binom{n}{k_{1}}\binom{n-k_{1}}{k_{2}}\cdots\binom{n-k_{1}-\cdots-k_{r-1}}{k_{r}}(p-1)^2(p-1)^{r-2}p\\
&=2^{n}p\sum_{r=2}^{n-1}\sum_{\substack{k_{1}+\cdots+k_{r}=n\\k_{i}\geq1,i=1,\ldots,r-1,k_r\geq 2}}(p-1)^{n+r}\frac{n!}{(k_{1})!(n-k_{1})!}\frac{(n-k_{1})!}{(k_{2})!(n-k_{1}-k_{2})!}\cdots \frac{(n-k_{1}-\cdots-k_{r-1})!}{k_{r}!(n-k_{1}-\cdots-k_{r})!}\\
&=2^{n}p\sum_{r=2}^{n-1}(p-1)^{n+r}\sum_{\substack{k_{1}+\cdots+k_{r}=n\\k_{i}\geq1,i=1,\ldots,r-1,k_r\geq 2}}\frac{n!}{k_{1}!k_{2}!\cdots k_{r}!}
\end{align*}}

By combining all three groups of NCF's, one gets that the total number of NCF's in $n$ variables is given by
\begin{equation}\label{eq3.2}
\begin{aligned}
|\mathbb{NCF}(n)|&=2^n(p-1)^{n+1}p+N_1+N_2\\
&=2^{n-1}p(p-2)\sum_{\substack{r=2}}^{n}(p-1)^{n+r-1}\sum_{\substack{k_{1}+\cdots+k_{r-1}=n-1\\k_{i}\geq1,i=1,\ldots,r-1}}\frac{n!}{k_{1}!k_{2}!\cdots k_{r-1}!}\\
&\: +2^{n}p\sum_{\substack{r=1}}^{n-1}(p-1)^{n+r}\sum_{\substack{k_{1}+\cdots+k_{r}=n\\k_{i}\geq1,i=1,\ldots,r-1,k_{r}\geq2}}\frac{n!}{k_{1}!k_{2}!\cdots k_{r}!}
\end{aligned}\end{equation}
\end{proof}

Note that for $p=2$ we get the same formula as in \cite{Yua1}. However, we are now also able to compute the number of multistate NCF's. For example, when $p=3$ and $n=2,3,4$, we get 192, 5568, 219468, respectively; when $p=5$ and $n=2,3,4$, we get 5120, 547840, 78561280, respectively. These results are consistent with those in \cite{Mur2}.

It has been shown in \cite{Mur2} that the number of multistate nested canalizing functions can be calculated recursively. Thus, by equating \ref{eq3.2} to the recursive relation, we have

\begin{cor}
\label{co3.2} For the nonlinear recursive sequence
\[a_{2}=4(p-1)^4, a_{n}=\sum_{r=2}^{n-1}\binom{n}{r-1}2^{r-1}(p-1)^ra_{n-r+1}+2^{n-1}(p-1)^{n+1}(2+n(p-2)) , n\geq3\]
it holds that 
\[\big|\mathbb{NCF}(n)\big| = pa_n,\]
and the explicit solution for $a_n$ is given by
\begin{align*}
a_n&=2^{n-1}(p-2)\sum_{\substack{r=2}}^{n}(p-1)^{n+r-1}\sum_{\substack{k_{1}+\cdots+k_{r-1}=n-1\\k_{i}\geq1,i=1,\ldots,r-1}}\frac{n!}{k_{1}!k_{2}!\cdots k_{r-1}!}\\
&+2^{n}\sum_{\substack{r=1}}^{n-1}(p-1)^{n+r}\sum_{\substack{k_{1}+\cdots+k_{r}=n\\k_{i}\geq1,i=1,\ldots,r-1,k_{r}\geq2}}\frac{n!}{k_{1}!k_{2}!\cdots k_{r}!}
\end{align*}
\end{cor}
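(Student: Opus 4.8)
The plan is to reduce everything to Corollary~\ref{co3.1}. Denote by $\hat a_n$ the explicit expression claimed for $a_n$. Comparing it termwise with the closed form of Equation~\ref{eq3.2}, one sees immediately that $|\mathbb{NCF}(n)| = p\,\hat a_n$, since factoring a single $p$ out of every summand of Equation~\ref{eq3.2} yields exactly $\hat a_n$. Hence both assertions of the corollary follow at once provided $\hat a_n$ is the unique solution of the stated recursion: the recursion expresses $a_n$ through $a_{n-1},\dots,a_2$ only, so the prescribed value $a_2=4(p-1)^4$ determines the sequence uniquely, and it suffices to verify that $\hat a_n$ meets the initial condition and the recurrence. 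The base case is immediate: at $n=2$ only the terms $r=2$ (first sum) and $r=1$ (second sum) survive, and they collapse to $4(p-2)(p-1)^3+4(p-1)^3=4(p-1)^4$.

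For the recurrence I would argue combinatorially, using that $\hat a_n=|\mathbb{NCF}(n)|/p$ counts NCFs with the outermost constant $B_1$ held fixed, and partitioning these by the size $k_1$ of the first layer $M_1$ in the unique representation of Theorem~\ref{th3.2}. Write $k_1=r-1$. For $1\le k_1\le n-2$ one first selects the layer-one variables, in $\binom{n}{r-1}$ ways, and their intervals, in $(2(p-1))^{r-1}$ ways; by the layered form in Theorem~\ref{th3.2} the cofactor of $M_1$ in $f$ is then an arbitrary NCF $g$ on the remaining $n-r+1$ variables, whose leading constant $B_2$ is forced to be nonzero (so that the first two layers have distinct outputs). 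Since $|\mathbb{NCF}(n-r+1)|=p\,\hat a_{n-r+1}$ distributes the $p$ values of that constant evenly, there are $(p-1)\,\hat a_{n-r+1}$ choices for $g$, and multiplying gives precisely the summand $\binom{n}{r-1}2^{r-1}(p-1)^r\hat a_{n-r+1}$.

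It then remains to fold the two boundary cases $k_1=n$ and $k_1=n-1$ into the closed term $2^{n-1}(p-1)^{n+1}(2+n(p-2))$. The single-layer case $k_1=n$ gives $f=B_2M_1+B_1$ and contributes $(2(p-1))^n(p-1)=2^n(p-1)^{n+1}$, accounting for the summand $2\cdot 2^{n-1}(p-1)^{n+1}$. When $k_1=n-1$ the remaining single variable forms the last layer, so $k_r=1$ and Theorem~\ref{th3.2} imposes $B_{r+1}+B_r\neq0$; by Lemma~\ref{lm3.1} there are exactly $(p-1)^2(p-2)$ admissible last layers, which together with the $\binom{n}{n-1}=n$ choices of first-layer variables and their $(2(p-1))^{n-1}$ intervals yields $n\,2^{n-1}(p-1)^{n+1}(p-2)$, accounting for the summand $n(p-2)\cdot 2^{n-1}(p-1)^{n+1}$. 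Adding the two boundary contributions reproduces $2^{n-1}(p-1)^{n+1}(2+n(p-2))$, completing the recurrence.

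The main obstacle is precisely this boundary bookkeeping. One must notice that for $k_1=n-1$ there is no meaningful $\hat a_1$ to invoke, so the single remaining variable has to be counted through Lemma~\ref{lm3.1} instead, and that the condition $a+b\ne0$ there coincides exactly with the last-layer constraint $B_{r+1}+B_r\ne0$ of Theorem~\ref{th3.2}; getting the factors $(p-1)^2(p-2)$ and $2^{n-1}(p-1)^{n+1}$ to combine correctly is the one genuinely delicate point. Once the contributions match, uniqueness of the recursively defined sequence forces $a_n=\hat a_n$, proving both $|\mathbb{NCF}(n)|=p\,a_n$ and the closed form. As an alternative one could take the recursion from \cite{Mur2} as given and verify the identity purely algebraically by substituting Equation~\ref{eq3.2} and resumming the nested multinomial sums, but there the entire difficulty migrates into the sum manipulation, which the combinatorial route sidesteps.
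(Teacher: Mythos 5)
Your proposal is correct, and it takes a genuinely different --- and more self-contained --- route than the paper. The paper gives essentially no argument for this corollary: it imports the fact that $|\mathbb{NCF}(n)|=pa_n$ from \cite{Mur2}, where the recursion was derived as a count of NCFs, and then simply divides the closed formula of Corollary \ref{co3.1} (Equation \ref{eq3.2}) by $p$ to read off the explicit solution. You instead re-prove the recursion from the paper's own machinery: after verifying the base case $\hat a_2=4(p-1)^4$, you peel off the first layer $M_1$ of the unique representation in Theorem \ref{th3.2}, counting the cofactor as an NCF on $n-k_1\geq 2$ variables with nonzero leading constant via $(p-1)\hat a_{n-k_1}$ (the even distribution over the $p$ leading constants that you invoke is immediate from the shift bijection $g\mapsto g+c$, or from the explicit factor $p$ in Equation \ref{eq3.2}), and you correctly isolate the two boundary cases: $k_1=n$ contributing $(2(p-1))^n(p-1)=2^n(p-1)^{n+1}$, and $k_1=n-1$, where the remaining single variable must be handled by Lemma \ref{lm3.1} rather than a nonexistent $\hat a_1$; your observation that the condition $a+b\neq 0$ in Lemma \ref{lm3.1} is exactly the constraint $B_{r+1}+B_r\neq 0$ of Theorem \ref{th3.2}, and that the lemma's halving of the $2(p-1)$ interval choices is already built into the count $(p-1)^2(p-2)$, is precisely the delicate bookkeeping needed (indeed, the functions with $B_2+B_3=0$ excluded there are exactly those reabsorbed into the single-layer case $k_1=n$, so there is no double counting; exhaustiveness and injectivity of the decomposition $f=M_1g+B_1$ rest on the uniqueness part of Theorem \ref{th3.2}, with $B_2\neq 0$ preventing $M_1$ from merging into the next layer). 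The trade-off: the paper's route is two lines but leaves the corollary logically dependent on the external recursive derivation in \cite{Mur2}, whereas your argument proves the recursion directly from Theorem \ref{th3.2} and Lemmas \ref{lm3.1}--\ref{lm3.2}, so the corollary becomes an independent cross-check of both \cite{Mur2} and Corollary \ref{co3.1} rather than a consequence of the former.
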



\section{Generalization and Permutation Equivalence of NCF's}\label{4}
In this section, we generalize the concept of \emph{nested canalizing functions} to any finite field $\Ff_q$, where $q$ is a power of a prime.
Besides, the canalizing sets  are no longer restricted to intervals containing exactly one endpoint but can be any proper subset of $\Ff_q$. Within this setting, similar results as in the previous section are obtained. Since the proofs are basically the same as in Section \ref{3}, we just list the main results and omit the proof. 

\begin{defn}\label{def4.1}
Let $f$ be a function in $n$ variables over $\Ff_q$, and let $S_i$ be proper subsets of $\Ff_q$, $i=1,\ldots,n$.  Let $\sigma$ be a permutation of the set $\{1,2,\ldots,n\}$. Then the function $f$ is a \emph{nested canalizing function} in the variable order $x_{\sigma(1)},\ldots,x_{\sigma(n)}$ with canalizing input sets $S_{1},\ldots,S_{n}$ and canalized output values $b_{1},\ldots,b_{n},b_{n+1}$, with $b_n\neq b_{n+1}$, if it can be represented in the form
\[f(x_{1},\ldots,x_{n})=\left\{\begin{array}[c]{ll}
b_{1} & x_{\sigma(1)}\in S_{1},\\
b_{2} & x_{\sigma(1)} \notin{ S_{1}}, x_{\sigma(2)}\in S_{2},\\
b_{3} & x_{\sigma(1)}\notin{ S_{1}}, x_{\sigma(2)} \notin{ S_{2}},x_{\sigma(3)}\in S_{3},\\
\vdots  & \\
b_{n} & x_{\sigma(1)} \notin{ S_{1}},\ldots,x_{\sigma(n-1)} \notin{ S_{n-1}}, x_{\sigma(n)}\in S_{n},\\
{b_{n+1}} & x_{\sigma(1)} \notin{ S_{1}},\ldots,x_{\sigma(n-1)} \notin{ S_{n-1}}, x_{\sigma(n)}\notin{S_{n}}.\end{array}\right.\]
The function $f$ is called nested canalizing if $f$ is nested canalizing in some variable order with some canalizing input sets and some canalized output values.
\end{defn}

This definition generalizes the concept of nested canalizing functions as defined in \cite{Mur}. 
However, NCF's defined in this way can still be uniquely represented as a polynomial as discussed above. 
The following theorem is a generalized version of Theorem \ref{th3.2} with the same arguments used to prove it.

\begin{theorem}\label{th4.1}
Given $n\geq2$, the function $f: \Ff_q^n \rightarrow \Ff_q$ is nested canalizing if and only if  it can be uniquely written as
\begin{equation}\label{eq4.1}
f(x_{1},\ldots,x_{n})=M_{1}(M_{2}(\cdots(M_{r-1}(B_{r+1}M_{r}+B_r )+B_{r-1})\cdots)+B_2)+B_1,
\end{equation}
where each $M_{i}$ is a product of indicator functions of a set of disjoint variables. More precisely, 
we have that $M_{i}=\prod_{j=1}^{k_{i}}(Q_{S_{i_j}}(x_{i_j})), i=1,\ldots,r$, $k_{i}\geq1$ for $i=1,\ldots,r, k_{1}+ \ldots + k_{r}=n$, 
$B_2,\ldots,B_{r+1}\neq 0$, $B_1\in \Ff$, $\{i_{j}|j=1,\ldots,k_{i}, i=1,\ldots,r\}=\{1,\ldots,n\}$, 
each $S_i$, $i=1,\ldots,n$ is any proper subset of $\Ff_q$, and, if $k_r=1$, then $B_{r+1}+B_r\neq 0$.
\end{theorem}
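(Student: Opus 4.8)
The plan is to reproduce the three-part proof of Theorem \ref{th3.2} essentially verbatim, after first checking that every ingredient used there survives the two relaxations made here, namely passing from the prime field $\Ff$ to an arbitrary finite field $\Ff_q$ and from interval canalizing sets to arbitrary proper subsets. The crucial observation is that Definition \ref{def3.1} of the indicator function $Q_S$, and the resulting identity $Q_S(x) + Q_{S^c}(x) = 1$, make sense for any (nonempty) proper subset $S$ of any finite field; the interval hypothesis was invoked in Section \ref{3} only for the enumeration in Lemmas \ref{lm3.1} and \ref{lm3.2}, never in the structural claims. Likewise, Proposition \ref{prop*} (a canalized output is unique, and a canalizing variable that stays essential in a subfunction remains canalizing there) is a statement about values and subfunctions with no reference to the field being prime or the canalizing sets being intervals, so it continues to hold over $\Ff_q$. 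Since every function $\Ff_q^n \to \Ff_q$ is a polynomial in which each variable has degree at most $q-1$, the algebraic normal form machinery transfers without change.

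For the two implications I would copy the argument directly. Given $f$ in the form of Equation \ref{eq4.1}, I would set $b_i = \sum_{j=1}^i B_j$ and verify by inspection that $f$ is a $\{\sigma':\mathbb{S}':\beta'\}$ NCF with exactly the same $\sigma', \mathbb{S}', \beta'$ as in the proof of Theorem \ref{th3.2}; the computation uses only that $Q_{S_{i_j}}(x_{i_j}) = 0$ precisely when $x_{i_j} \in S_{i_j}$, which is the definition. Conversely, given a $\{\sigma:\mathbb{S}:\beta\}$ NCF I would group the canalized outputs $b_1, \ldots, b_{n+1}$ into maximal runs of equal values $C_1 \neq C_2 \neq \cdots \neq C_{r+1}$, read off the layer sizes $k_1, \ldots, k_r$, and set $B_1 = C_1$ and $B_{j+1} = C_{j+1} - C_j$, so that $B_2, \ldots, B_{r+1} \neq 0$. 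None of this uses the interval property.

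The uniqueness half is where I would be most careful, though here too the original argument goes through. Taking $\sigma$ to be the identity, the variables of $M_1$ are canalizing with common output $B_1$; fixing them to values in the respective $S_j^c$ exposes the variables of $M_2$ as canalizing variables of the resulting subfunction, but with output $B_1 + B_2 \neq B_1$, so by the first part of Proposition \ref{prop*} they cannot be canalizing variables of $f$ itself. Iterating identifies each layer uniquely and forces $r$, the $M_i$, and the $B_i$ to be determined by $f$. The one place the interval hypothesis could have mattered is the boundary condition ``if $k_r = 1$ then $B_{r+1} + B_r \neq 0$'': when $k_r = 1$ and $B_{r+1} + B_r = 0$ the last factor collapses, since $B_{r+1} Q_{S_r}(x) + B_r = -B_{r+1} Q_{S_r^c}(x)$ by the identity $Q_{S_r} + Q_{S_r^c} = 1$, and this single indicator could be absorbed into $M_{r-1}$, reducing the layer count. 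Because $S_r^c$ is again a nonempty proper subset of $\Ff_q$, the same collapse and the same exclusion apply verbatim.

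The genuine obstacle, such as it is, is confirming the structural non-collapse statement that generalizes Lemma \ref{lm3.2}(1): that for $k \geq 2$ a product $b\prod_{j} Q_{S_j}(x_j) + a$ can never be rewritten as $c\prod_j Q_{S_j'}(x_j)$. The proof in Section \ref{3} reduces this, via $a+b=0$, to the impossibility of the equality $S_1^c \times \cdots \times S_k^c = (S_1' \times \Ff_q^{k-1}) \cup \cdots \cup (\Ff_q^{k-1} \times S_k')$, a box on the left versus the complement of the box $\prod_j (S_j')^c$ on the right. I would check that this impossibility is purely combinatorial: if the two boxes partitioned $\Ff_q^k$, disjointness would give a coordinate $i$ with empty factor-intersection, and choosing $x_i$ in the $i$th factor of the left box together with some other coordinate $i' \neq i$ avoiding the $i'$th factor (possible since each factor is proper and $k \geq 2$) produces a point covered by neither box, a contradiction. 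This argument uses neither that the $S_j$ are intervals nor that $q$ is prime, so once it is in place every step above is routine and the theorem follows exactly as Theorem \ref{th3.2} did.
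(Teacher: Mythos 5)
Your proposal is correct and takes essentially the same approach as the paper, which proves Theorem \ref{th4.1} simply by remarking that the arguments of Theorem \ref{th3.2} carry over once the prime field is replaced by $\Ff_q$ and intervals by arbitrary proper subsets --- exactly the verification you carry out layer by layer. Your only addition is to spell out the combinatorial impossibility of $S_1^c\times\cdots\times S_k^c=(S_1'\times\Ff_q^{k-1})\cup\cdots\cup(\Ff_q^{k-1}\times S_k')$, which the paper's Lemma \ref{lm3.2} merely asserts, and your argument for it is sound.
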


As in the previous section, we can count the number of NCF's and get
\begin{cor}
\label{co4.1} 
For $n\geq2$, the number of NCF's over $\Ff_q$, where NCF's are defined as in Definition \ref{def4.1}, is given by
\begin{align*}
&2^{n-1}q(q-2)(2^{q-1}-1)^{n} \sum_{\substack{r=2}}^{n}(q-1)^{r-1}\sum_{\substack{k_{1}+\cdots+k_{r-1}=n-1\\k_{i}\geq1,i=1,\ldots,r-1}}\frac{n!}{k_{1}!k_{2}!\cdots k_{r-1}!}\\
&+2^nq(2^{q-1}-1)^n\sum_{\substack{r=1}}^{n-1}(q-1)^{r}\sum_{\substack{k_{1}+\cdots+k_{r}=n\\k_{i}\geq1,i=1,\ldots,r-1,k_{r}\geq2}}\frac{n!}{k_{1}!k_{2}!\cdots k_{r}!}
\end{align*}
\end{cor}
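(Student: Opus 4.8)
The plan is to re-run the three-fold enumeration from the proof of Corollary \ref{co3.1} essentially verbatim, changing only two inputs: the prime $p$ is replaced by the prime power $q$, and the phrase ``interval of $\Ff$'' is replaced everywhere by ``proper nonempty subset of $\Ff_q$''. The first task is therefore to check that Lemmas \ref{lm3.1} and \ref{lm3.2}, which supply the per-layer counts, survive this replacement. Both rest on the identity $Q_S(x)=1-Q_{S^c}(x)$, which holds for \emph{every} subset $S$ and not merely for intervals, so the collapse $bQ_S(x)+a=cQ_{S'}(x)$ is still governed precisely by $a+b=0$, and each single-variable function still admits exactly the two representations via $S$ and $S^c$. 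The only quantity that changes is the number of admissible sets: there are $2^q-2$ proper nonempty subsets of $\Ff_q$ (we exclude $\emptyset$ and $\Ff_q$ so that the variable is essential), and since $S\mapsto S^c$ is fixed-point-free on them, they fall into $(2^q-2)/2=2^{q-1}-1$ classes indistinguishable at the level of a single-variable function.

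With this substitution, the generalized Lemma \ref{lm3.1} yields $(q-1)(q-2)(2^{q-1}-1)$ non-collapsible functions $B_{r+1}Q_S+B_r$, the factors being the choices of $B_{r+1}\neq 0$, of $B_r\notin\{0,-B_{r+1}\}$, and of the $2^{q-1}-1$ set-classes; the generalized Lemma \ref{lm3.2} yields $(q-1)^2(2^q-2)^{k}$ functions $B_{r+1}M_r+B_r$ when $k=k_r\geq 2$. The impossibility argument in Lemma \ref{lm3.2}(1) --- that a nonempty box $S_1^c\times\cdots\times S_k^c$ cannot coincide with a union of coordinate cylinders, equivalently with the complement of a box, once $k\geq 2$ --- is purely set-theoretic and uses no interval structure, so it transfers unchanged; likewise a nonempty product determines its factors, so distinct subset tuples give distinct functions and the count $(2^q-2)^k$ is exact.

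Granting this, Theorem \ref{th4.1} holds by the identical argument to Theorem \ref{th3.2} (the only appeal to intervals there was the non-collapsing of the last layer, which we have just re-established), and the enumeration proceeds exactly as in Corollary \ref{co3.1}, split into the three groups $(r=1)$, $(r>1,\,k_r=1)$, and $(r>1,\,k_r\geq 2)$. For $r=1$ one gets $q(q-1)(2^q-2)^n$. For the second group, multiplying the subset factor $(2^q-2)^{n-1}$ for $M_1,\ldots,M_{r-1}$, the multinomial $\binom{n}{k_1}\cdots$, the last-layer factor $(q-1)(q-2)(2^{q-1}-1)$, the $q$ choices for $B_1$, and the $(q-1)^{r-2}$ choices for $B_2,\ldots,B_{r-1}$, then using $2^q-2=2(2^{q-1}-1)$ and collecting powers, produces the first displayed sum. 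For the third group the last-layer factor is instead $(q-1)^2(2^q-2)^{k_r}$, which after the same simplification produces the second displayed sum; the $r=1$ contribution merges into it to extend the range down to $r=1$.

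The only genuinely substantive step is the first one: confirming that the interval-specific counting in Lemmas \ref{lm3.1} and \ref{lm3.2} depends solely on the complementation identity and on the cardinality of the admissible family of sets, both of which generalize cleanly, with the $2(p-1)$ intervals replaced by $2^q-2$ proper nonempty subsets. Once that is in hand, every remaining step is bookkeeping identical to Section \ref{3}, which is precisely why the authors omit it.
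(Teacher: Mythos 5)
Your proposal is correct and takes essentially the same approach as the paper, whose proof of Corollary \ref{co4.1} consists precisely of the substitution you describe: in the proof of Corollary \ref{co3.1}, replace $p$ by $q$, the $p-1$ nonzero elements by $q-1$, and the $2p-2$ intervals by the $2^q-2$ proper nonempty subsets of $\Ff_q$. Your additional verification that Lemmas \ref{lm3.1} and \ref{lm3.2} rest only on the complementation identity $Q_S(x)=1-Q_{S^c}(x)$ and on the cardinality of the admissible family of sets (and that the box-versus-union-of-cylinders impossibility in Lemma \ref{lm3.2} is purely set-theoretic) is exactly the bookkeeping the paper leaves implicit.
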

\begin{proof}
The proof equals the one of Corollary \ref{co3.1}. We just replace the number of elements $p$ in the finite field by $q$ , the number of nonzero elements, $p-1$ by $q-1$, and the number of intervals, $2p-2$, by the number of proper subsets $2^q-2$, respectively.
\end{proof}
\begin{defn}\label{def4.2}
Given two functions $f(x_1,\ldots,x_n)$ and $g(x_1,\ldots,x_n)$ over $\Ff_q$. We call $f$ and $g$ permutation equivalent if there exists a permutation $\sigma$ such that $f(x_1,\ldots,x_n)=g(x_{\sigma(1)},\ldots,x_{\sigma(n)})$.
\end{defn}

It is well-known that equivalent functions share many properties. For example, two equivalent Boolean nested canalizing functions have the same sensitivity and the same average sensitivity \cite{Yua3,Yua1}. Having an equivalence relation on the set of all NCF's, the number of different equivalence classes of NCF's is of interest. However, we first need the following combinatorial result.

\begin{lemma}\label{lm4.1}\cite{Cha}(Page 70)
Given $n$, $r$ and $s_i$, $i=1,\ldots,r$ and $s=s_1+\cdots+s_r\leq n$. 
Then the number of integer solution of the equation $k_1+\cdots+k_r=n$, where $k_i\geq s_i$, is
\[\sum_{\substack{k_{1}+\cdots+k_{r}=n\\k_{i}\geq s_i, s=s_1+\cdots +s_r\leq n}}1 = \binom{r+n-s-1}{r-1}.\]
\end{lemma}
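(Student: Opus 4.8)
The plan is to reduce the problem to the standard ``stars and bars'' count of nonnegative integer solutions by absorbing the lower bounds $s_i$ into a change of variables. First I would set $k_i' = k_i - s_i$ for each $i = 1, \ldots, r$. Since the constraint $k_i \geq s_i$ is equivalent to $k_i' \geq 0$, and since $\sum_{i=1}^r k_i = n$ translates into $\sum_{i=1}^r k_i' = n - s$ with $s = s_1 + \cdots + s_r$, this substitution yields a bijection between the solution set we wish to count and the set of nonnegative integer solutions of $k_1' + \cdots + k_r' = n - s$. The hypothesis $s \leq n$ guarantees $n - s \geq 0$, so the reduced equation is consistent and the count is well defined.

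Second, I would count the nonnegative integer solutions of $k_1' + \cdots + k_r' = n - s$. This is the classical weak-composition count: the number of ways to write a nonnegative integer $m = n - s$ as an ordered sum of $r$ nonnegative integers is $\binom{m + r - 1}{r - 1}$. The standard justification arranges $m$ indistinguishable tokens and $r - 1$ separators in a row; each arrangement of the $m + r - 1$ symbols corresponds uniquely to one solution, the separators partitioning the tokens into the summands $k_1', \ldots, k_r'$. Substituting $m = n - s$ gives
\[
\binom{n - s + r - 1}{r - 1} = \binom{r + n - s - 1}{r - 1},
\]
which is precisely the claimed formula.

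Since this is a well-known enumerative identity (and indeed the authors cite \cite{Cha}), there is no genuine obstacle. The only point requiring minor care is checking that the change of variables is a true bijection rather than merely an injection: every nonnegative solution $(k_1', \ldots, k_r')$ of the reduced equation lifts back to a valid solution $(k_1, \ldots, k_r) = (k_1' + s_1, \ldots, k_r' + s_r)$ of the original constrained equation, and distinct reduced solutions give distinct originals. Thus the whole argument amounts to the one-line substitution followed by a direct appeal to the stars-and-bars count, and the main ``work'' is simply verifying that the index shift preserves both the summation constraint and the inequality constraints.
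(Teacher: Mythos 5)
Your proof is correct: the shift $k_i' = k_i - s_i$ reduces the count to the standard stars-and-bars enumeration of weak compositions of $n-s$ into $r$ parts, giving $\binom{r+n-s-1}{r-1}$ exactly as claimed. The paper itself supplies no proof, citing Charalambides instead, and your argument is precisely the canonical one found in that reference, so nothing is missing or different.
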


\begin{theorem}\label{th4.2}
The number of different equivalence classes of NCF's, defined as in Definition \ref{def4.1}, is
\[N=2^{n-1}(q-1)q^{n}(2^{q-1}-1)^n.\]
\end{theorem}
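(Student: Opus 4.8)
The plan is to turn the statement into an enumeration problem by means of the unique normal form of Theorem~\ref{th4.1}, and to use Corollary~\ref{CoA} to decide which of the data in that normal form are constant along a permutation--equivalence class.

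First I would fix the invariants of a class. Write every NCF in the layered form of Equation~\ref{eq4.1}, so that it is encoded by the layer number $r$, the composition $(k_1,\dots,k_r)$ of $n$ recording the layer sizes, the output values $B_1\in\Ff_q$ and $B_2,\dots,B_{r+1}\in\Ff_q\setminus\{0\}$, and an assignment of a proper subset $S_i\subsetneq\Ff_q$ to each variable $x_i$. By Corollary~\ref{CoA} a permutation of the variables leaves $r$, the ordered layer sizes $(k_1,\dots,k_r)$, and all the output values unchanged, and merely relabels which variable carries which indicator set. Thus an equivalence class should be pinned down by two pieces of data: a \emph{skeleton} (the composition together with its admissible output values) and the collection of $n$ canalizing sets, the latter read modulo the permutation action and modulo the single complementation move of Proposition~\ref{prop2.1}. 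I would prove both implications of this characterization: that equality of the data produces a permutation carrying one normal form to the other, and conversely.

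Next I would enumerate the two pieces. For the skeleton, a composition into exactly $r$ parts admits $q(q-1)^{r}$ choices of output values ($B_1\in\Ff_q$ and each later $B_j\neq0$), while the number of compositions of $n$ into $r$ positive parts is $\binom{n-1}{r-1}$, the case $s_1=\dots=s_r=1$ of Lemma~\ref{lm4.1}. Summing and using the binomial theorem gives
\[
\sum_{r=1}^{n}\binom{n-1}{r-1}\,q(q-1)^{r}=q(q-1)\bigl(1+(q-1)\bigr)^{n-1}=(q-1)q^{n}.
\]
For the canalizing sets, assign to each of the $n$ variables one of the $2^{q}-2$ proper subsets of $\Ff_q$, giving $(2^{q}-2)^{n}$ possibilities; together with the $(q-1)q^{n}$ skeletons this produces $(q-1)q^{n}(2^{q}-2)^{n}$ pieces of normal--form data. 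The complementation move of Proposition~\ref{prop2.1} is a fixed--point--free involution on this set, since no subset equals its complement, so exactly half of the data survive, yielding
\[
N=\tfrac12(q-1)q^{n}(2^{q}-2)^{n}=2^{n-1}(q-1)q^{n}(2^{q-1}-1)^{n}.
\]

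The step I expect to be the crux is the characterization asserted in the first paragraph: that permutation--equivalence classes are in bijection with pairs (skeleton, subset assignment) taken modulo the complementation involution. The subtlety is that the skeleton records the layer structure in unlabeled form (a composition), whereas the subset assignment is recorded per variable; reconciling these two bookkeepings under the permutation action---and simultaneously respecting the non--collapse condition ``$k_r=1\Rightarrow B_{r+1}+B_r\neq0$'' of Theorem~\ref{th4.1}, which governs exactly when the complementation move changes the layer count---is where the argument must be made carefully, and where one must check that no class is double counted or lost. Once this bijection is secured, the enumeration above is routine, with Lemma~\ref{lm4.1} furnishing the count of compositions.
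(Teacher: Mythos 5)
Your enumeration is, in computational substance, the paper's own: the paper also counts normal forms of Equation \ref{eq4.1} with the variables frozen in one fixed order, getting $q(q-1)^{r}(2^q-2)^n$ per composition when $k_r\geq 2$ and $\tfrac{1}{2}q(q-2)(q-1)^{r-1}(2^q-2)^n$ when $k_r=1$ (the $\Ff_q$-analogues of Lemmas \ref{lm3.1} and \ref{lm3.2}), then collapses the sums with Lemma \ref{lm4.1} and the binomial theorem. Your repackaging---drop the condition ``$k_r=1\Rightarrow B_r+B_{r+1}\neq 0$'', count all $(q-1)q^n(2^q-2)^n$ tuples, and halve by the complementation involution---is equivalent and arguably cleaner: the involution pairs the two parametrizations of a singleton last layer with each other, and pairs each tuple with $k_r\geq 2$ with the degenerate tuple of shape $(k_1,\ldots,k_r-1,1)$ whose last two constants sum to zero, both representing the same function. (Note the involution is fixed-point-free not only because $S\neq S^c$ but because in the second case it changes the composition; you would need to verify it is well defined across these strata, which it is, by the uniqueness part of Theorem \ref{th4.1}.)

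However, the crux you explicitly deferred is a genuine gap, and it is exactly the point the paper dispatches in a single unproven sentence (``By only considering the number of different NCF's with a fixed canalizing variable order \ldots we get exactly the number of different equivalent classes''). As you state it, your characterization is inconsistent with your own count: if the canalizing sets are ``read modulo the permutation action,'' then within each layer only the \emph{multiset} of sets is class data, and the enumeration should use multiset coefficients $\binom{2^q-2+k_i-1}{k_i}$ per layer, not $(2^q-2)^{k_i}$. Ordered assignments differing by a within-layer rearrangement of distinct sets give \emph{distinct} functions that are permutation equivalent under Definition \ref{def4.2}, so the halved ordered count enumerates NCF's that are nested canalizing in one fixed variable order, i.e., it counts each class once per ordered arrangement of each layer's multiset. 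Concretely, for $q=2$, $n=2$ the formula gives $N=8$, yet $x_1\wedge\bar{x}_2$ and $\bar{x}_1\wedge x_2$ (and likewise their negations) are permutation equivalent, leaving only $6$ classes; moreover the degenerate--nondegenerate pairing ceases to be a well-defined involution on multiset data, so no naive halving repairs this. Thus the bijection you proposed to ``prove in both implications'' cannot be established as stated: your attempt faithfully reproduces the paper's argument, including its unproven identification of fixed-order normal forms with equivalence classes, and carrying out your paragraph-one plan honestly would have exposed the failure of injectivity on classes rather than completed the proof.
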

\begin{proof}

By only considering the number of different NCF's with a fixed canalizing variable order $\sigma$ in Equation \ref{eq4.1}, we get exactly the number of different equivalent classes. Thus, we can follow the same enumerative schedule as we did in the proof of Corollary \ref{co4.1}. The only difference is that we do not consider the permutation of the variables now. Hence, we get

\begin{align*}
N&=2^{n-1}q(q-2)(2^{q-1}-1)^n \sum_{\substack{r=2}}^{n}(q-1)^{r-1}\sum_{\substack{k_{1}+\cdots+k_{r-1}=n-1\\k_{i}\geq1,i=1,\ldots,r-1 }}1\\
&\qquad +2^{n}q(2^{q-1}-1)^n\sum_{\substack{r=1}}^{n-1}(q-1)^{r}\sum_{\substack{k_{1}+\cdots+k_{r}=n\\k_{i}\geq1,i=1,\ldots,r-1,k_{r}\geq2}}1\\
&=2^{n-1}q(q-2)(2^{q-1}-1)^n \sum_{\substack{r=2}}^{n}(q-1)^{r-1}\binom{n-2}{r-2}+2^{n}q(2^{q-1}-1)^n\sum_{\substack{r=1}}^{n-1}(q-1)^{r}\binom{n-2}{r-1}\\
&=2^{n-1}q(q-2)(2^{q-1}-1)^n (q-1)q^{n-2}+2^{n}q(2^{q-1}-1)^n(q-1)q^{n-2}\\
&=2^{n-1}(q-1)q^{n}(2^{q-1}-1)^n,
\end{align*}
where we used Lemma \ref{lm4.1} to eradicate the sums in the second equality.
\end{proof}


\section{Discussion}\label{5}
In this paper, we generalized the concept of \emph{nested canalization} to finite fields without any restriction on the canalizing input sets. By using the product of indicator functions, a generalization of extended monomials, we successfully generalized the characterization of Boolean NCF's \cite{Yua1} to the multistate case. Besides, we discussed the permutational equivalence of NCF's, and a number of different equivalence classes was obtained. The main contributions of the paper are threefold. Firstly, we have established a closed form formula for the number of
multistage nested canalizing functions as defined in \cite{Mur}, improving on the recursive formula given there. Secondly, we
have established a very general definition of multistate nested canalizing function for which several analytical results still hold true. 
And here as well, we establish a closed form formula for the number of such functions.  And, thirdly, we have characterized and
calculated the number of equivalence classes of NCFs under permutation equivalence.


\end{document}